\documentclass[12pt]
{amsart}
\addtolength{\textwidth}{4cm}
\addtolength{\hoffset}{-2cm}

\theoremstyle{plain}
\newtheorem{thm}{Theorem}[section]
\newtheorem{cor}[thm]{Corollary}
\newtheorem{lemma}[thm]{Lemma}

\theoremstyle{definition}

\newtheorem{exx}[thm]{Example}

\theoremstyle{remark}

\newtheorem*{remark}{Remark}

\numberwithin{equation}{section}

 \newcommand{\al}{\alpha}
  \newcommand{\vep}{\varepsilon}

\def\bR{\hbox{$\mathbb R$}}
\def\bvp{boundary value problem}
\def\beq{\begin{equation}}
\def\endeq{\end{equation}}
\def\bethm{\begin{thm}}
\def\enthm{\end{thm}}

\begin{document}

\title[periodic boundary value problems]{Existence of solutions to nonlinear second order periodic boundary value problems} 

\author{Yong Zhang}

\address{ Department of Mathematics\\ 
University of Manitoba\\ \mbox{Winnipeg R3T 2N2}\\ \indent Canada}
\email{zhangy@cc.umanitoba.ca}

\thanks{Supported by NSERC 238949-2011. }
\subjclass{Primary 34B15, 34C25}
\keywords{periodic solution; second order differential equation; upper and lower solutions}

\begin{abstract}
We consider existence of periodic boundary value problems of nonlinear second order ordinary differential equations.  Under certain half Lipschitzian type conditions several existence results are obtained. As applications positive periodic solutions of some $\phi$-Laplacian type equations and Duffing type equations are investigated.
\end{abstract}

\maketitle
\begin{center}
\today
\end{center}

\section{Introduction}

The lower and upper solution technique has been extensively investigated in studying boundary value problems of nonlinear differential equations. The ideal dates back to the work of  Perron \cite{Perron} on the Dirichlet problem for harmonic functions. The theory  for treating boundary value problems of second order nonlinear ordinary differential equations was established in the late 1960s by Jackson \cite{Jackson 68}. Significant contributions were also made by Schmitt in \cite{Schmitt 67, Schmitt 68, Schmitt 70}, and Jackson and Schrader in \cite{J-S 67}. Some notable applications were given in \cite{11, 12} by the author who characterized the existence of a positive solution to the Dirichlet problem of a type of sublinear differential equations.  Related results may also be seen in \cite{13, 14}). 

In this paper we consider the general periodic boundary value problem
\begin{equation}\label{1.1}
 \begin{cases}
 -x'' = f(t,x,x') \qquad (t\in [0,1]) \\
 x(0)=x(1), \, x'(0) = x'(1),
 \end{cases}
 \end{equation}
where $f \in C\left([0,1]\times \bR^2, \bR\right)$, the space of all real-valued continuous functions on $[0,1]\times \bR^2$. Classical results concerning this problem by using lower and upper solutions are due to Schmitt \cite{Schmitt 67}, who investigated the existence of a solution under a Nagumo condition. Recent relevant results may be found in \cite{K-S, Tor}. On the other hand, periodic solutions of special type of equations like Duffing equations have been attracting great attention lately (see \cite{F-M-N, L-S, R-T-V}).

In this paper we start with a sort of half Lipschitzian type conditions to establish an existence theorem. Then we will consider some one-sided $y$-growth conditions on the function $f(t,x,y)$  to obtain some more existence results. As applications we will consider the forced pendulum equations with curvature, that is the $\phi$-Laplacian type equations of the form
\[
(\frac{x'}{\sqrt{1+x'^2}})' + \mu(t) \sin x - \ell(t,x) x' = e(t),
\]
and Duffing type equations of the form
\[
x'' +p(t)g(x) -q(t)h(x') = e(t),
\]
deriving several existence results about positive periodic solutions of the equations.

\section{Upper and lower solutions}

Let $\al(t), \beta(t) \in C^2[0,1]$. We call $\al(t)$ a \emph{lower solution} and $\beta(t)$ an upper solution of problem (\ref{1.1}) if 
\[
-\al''(t) \leq f(t,\al, \al') \quad (t\in [0,1]), \quad \al(0) = \al(1);
\]
\[
-\beta''(t) \geq f(t,\beta, \beta') \quad (t\in [0,1]), \quad \beta(0) = \beta(1).
\]
Given $a>0$ and $b\in \bR$ there is a unique solution $h(t)$ to the linear problem
\begin{equation}\label{1.2}
 \begin{cases}
 -h'' = -ah+bh' \qquad (t\in [0,1]) \\
 h(1)-h(0)=0, \, h'(1) - h'(0) =1.
 \end{cases}
 \end{equation}
 Precisely
 \[
 h(t) = \frac{(1-e^{\lambda_2})e^{\lambda_1t} + (e^{\lambda_1}-1)e^{\lambda_2t}}{(\lambda_1-\lambda_2)(e^{\lambda_1}-1)(1-e^{\lambda_2})},
 \]
 where $\lambda_1, \lambda_2$ are respectively the positive and negative roots of the equation $\lambda^2 +b \lambda - a =0$, that is
 \[ \lambda_1 =\frac{-b+\sqrt{b^2+4a}}{2} \text{ and } \lambda_2= \frac{-b-\sqrt{b^2+4a}}{2}. \]
 Clearly, $h(t) > 0$ for $t\in [0,1]$ and $h'(0)<0$.
 
 \begin{lemma}\label{lemma 1}
 Let $\al(t)$ and $\beta(t)$ be, respectively, lower and upper solutions of problem (\ref{1.1}). Let $r_1 = \al'(0) -\al'(1)$ and $r_2 = \beta'(1) -\beta'(0)$. Suppose that 
 \begin{itemize}
 \item[($E_0$)] there are constants $a,b, \delta\in \bR$ with $a>0$ such that 
 \[
 f(t,\beta, \beta') - f(t,\al, \al') \geq -a(\beta -\al) + b(\beta' - \al') -\delta \quad (t\in [0,1])\\\\\\\\\\\ .
 \]
 \end{itemize}
 Then 
 \[ \left( \beta(t) - \al(t) \right) -(r_1 + r_2)h(t) +\delta/a \geq 0 \quad t\in [0,1]. \]
 \end{lemma}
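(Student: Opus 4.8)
The plan is to reduce the statement to a maximum principle for the linear operator attached to \eqref{1.2}. First I would set $w(t) = \beta(t) - \al(t)$ and combine the defining inequalities for the lower and upper solutions: from $-\beta'' \geq f(t,\beta,\beta')$ and $-\al'' \leq f(t,\al,\al')$, subtraction gives
\[
-w''(t) \geq f(t,\beta,\beta') - f(t,\al,\al') \quad (t\in[0,1]).
\]
Applying hypothesis $(E_0)$ to the right-hand side yields the differential inequality $-w'' \geq -aw + bw' - \delta$ on $[0,1]$.

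Next I would fold the constant $\delta$ and the function $h$ into a single auxiliary function. Put
\[
u(t) = w(t) + \delta/a - (r_1+r_2)h(t).
\]
Because $h$ solves $-h'' = -ah + bh'$ exactly and the added constant $\delta/a$ contributes $a\cdot(\delta/a)=\delta$ to the zeroth-order term, a direct substitution shows $-u'' + au - bu' = \big(-w'' + aw - bw' + \delta\big) \geq 0$. Thus $u$ is a supersolution of the homogeneous operator $Lu := -u'' - bu' + au$, and the conclusion of the lemma is exactly the assertion $u \geq 0$ on $[0,1]$.

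I would then check that $u$ satisfies the genuine periodic boundary conditions $u(0)=u(1)$ and $u'(0)=u'(1)$. The value condition holds since $w(1)-w(0)=0$ (both $\al$ and $\beta$ are periodic in value) and $h(1)-h(0)=0$. For the derivative condition one computes $w'(1)-w'(0) = \big(\beta'(1)-\beta'(0)\big) + \big(\al'(0)-\al'(1)\big) = r_2 + r_1$, while $h'(1)-h'(0)=1$; hence $u'(1)-u'(0) = (r_1+r_2) - (r_1+r_2)\cdot 1 = 0$.

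Finally I would invoke a maximum principle, using $a>0$ crucially. Let $t_0$ minimize $u$ over $[0,1]$. If $t_0$ is interior, then $u'(t_0)=0$ and $u''(t_0)\geq 0$, so $0 \leq Lu(t_0) = -u''(t_0) + au(t_0) \leq au(t_0)$, forcing $u(t_0)\geq 0$. If the minimum is attained at an endpoint, the periodic conditions transfer it to the other endpoint and force $u'(t_0)=0$, after which a Taylor expansion gives the one-sided bound $u''(t_0)\geq 0$ and the same estimate applies. The main obstacle is precisely this endpoint case: the inequality $Lu\geq 0$ is not strict, so one must argue carefully that the periodic boundary data still deliver the needed vanishing of $u'$ and the correct sign of $u''$ at the boundary before concluding $u(t_0)\geq 0$, hence $u \geq 0$ throughout.
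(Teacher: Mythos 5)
Your proof is correct and follows essentially the same route as the paper: your $u$ is exactly the paper's auxiliary function $\theta(t)=(\beta-\al)-(r_1+r_2)h+\delta/a$, your inequality $Lu\geq 0$ is the paper's inequality (\ref{1.3}) rearranged, and the minimum-point/maximum-principle argument is the same. If anything, you are more careful than the paper at the one delicate spot --- the paper's proof silently assumes a minimizer $t_1\in[0,1)$ with $\theta'(t_1)=0$ and $\theta''(t_1)\geq 0$ exists even when the minimum falls at an endpoint, whereas you spell out how periodicity forces $u'(t_0)=0$ and the one-sided Taylor bound there.
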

 
 \begin{proof}
 Let 
\[\theta(t) = \left( \beta(t) - \al(t) \right) -(r_1 + r_2)h(t) +\delta/a.
\]
 Then $\theta(0)=\theta(1)$ and  $\theta'(0) = \theta'(1)$. Moreover
 \begin{align}\label{1.3}
 \theta'' &\leq -\left(f(t,\beta, \beta') - f(t,\al, \al') \right)- (r_1 + r_2)h''(t)\\ \notag
          & \leq a\left((\beta - \al) -(r_1 + r_2)h\right) - b\left((\beta' - \al') -(r_1 + r_2)h'\right) + \delta \\ \notag
          & = a \theta(t) -b\theta'(t)
 \end{align}
 for $t\in [0,1]$
 If the conclusion of the lemma were not true, then there would exist $t_1\in [0,1)$ such that $\theta(t_1) < 0$, $\theta'(t_1) =0$, and $\theta''(t_1) \geq 0$. But on the other hand, if this is the case, by inequality (\ref{1.3}) we would derive $\theta''(t_1)\leq a\theta(t_1) <0$. This is a contradiction. Thus $\theta(t)\geq 0$ for all $t\in [0.1]$.
 \end{proof}

 \begin{lemma}\label{lemma 2}
 Suppose that condition~($E_0$) holds for some $a,b,\delta\in \bR$ with $a>0$. Let 
 \[
 k_0 = -\frac{h'(0)}{h(0)} = -\frac{\lambda_1(1-e^{\lambda_2}) + \lambda_2(e^{\lambda_1}-1)}{(e^{\lambda_1}-e^{\lambda_2})}.
 \]
 Then $k_0 > 0$ and
 \begin{equation}\label{1.4}
 (\beta' - \al') +k_0(\beta-\al) -(r_1 + r_2)(k_0h+h') + k_0\delta/a \geq 0 \quad t\in [0,1].
 \end{equation}
 \end{lemma}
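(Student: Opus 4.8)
The plan is to reduce inequality (\ref{1.4}) to the single statement $\theta'(t)+k_0\theta(t)\ge 0$, where $\theta(t)=(\beta-\al)-(r_1+r_2)h+\delta/a$ is the function already used in Lemma~\ref{lemma 1}. Expanding $\theta'+k_0\theta$ and collecting terms shows it is exactly the left-hand side of (\ref{1.4}), and Lemma~\ref{lemma 1} gives $\theta\ge 0$; so everything reduces to controlling the combination of $\theta$ with its derivative. The positivity $k_0>0$ is immediate from the facts recorded just before the lemma, namely $h(0)>0$ and $h'(0)<0$, since $k_0=-h'(0)/h(0)$.

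The heart of the argument is a Wronskian comparison with $h$. I would set $W=h\theta'-h'\theta$ and use $h''=ah-bh'$ together with inequality (\ref{1.3}), i.e. $\theta''\le a\theta-b\theta'$, to get $W'=h\theta''-h''\theta\le -bW$, so that $(e^{bt}W)'\le 0$ and $e^{bt}W$ is non-increasing on $[0,1]$. The point is that $k_0$ is chosen precisely so that at the left endpoint $W(0)=h(0)\theta'(0)-h'(0)\theta(0)=h(0)\,(\theta'(0)+k_0\theta(0))$; hence proving (\ref{1.4}) at $t=0$ is equivalent to proving $W(0)\ge 0$.

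To pin down the sign of $W$ I would use the ratio $\rho=\theta/h$, which is $C^1$ and, by $\theta(0)=\theta(1)$ and $h(0)=h(1)$, satisfies $\rho(0)=\rho(1)$. Rolle's theorem then produces $t^*\in(0,1)$ with $\rho'(t^*)=0$, hence $W(t^*)=h(t^*)^2\rho'(t^*)=0$. Since $e^{bt}W$ is non-increasing and vanishes at $t^*$, it is $\ge 0$ on $[0,t^*]$; in particular $W(0)\ge 0$, which yields (\ref{1.4}) at $t=0$. To reach an arbitrary $t_0\in[0,1]$ I would exploit translation invariance: the periodic extension of $\theta$ shifted by $t_0$ is again nonnegative, periodic, $C^1$, and satisfies $\theta''+b\theta'-a\theta\le 0$ on each side of its single corner, so the same Wronskian argument applies to it and gives $\theta'(t_0)+k_0\theta(t_0)\ge 0$.

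I expect the main obstacle to be precisely this globalization. The pointwise bound one first reaches for by factoring the operator, namely $\theta'\ge\lambda_2\theta$, is strictly weaker than the required $\theta'\ge -k_0\theta$, because $k_0<-\lambda_2$; thus no purely local maximum-principle estimate can succeed, and the sharp constant $k_0$ is forced by the global periodic structure, which is exactly what the Rolle-plus-monotonicity step encodes. Some care is also needed to verify that the translated comparison keeps $e^{bt}W$ monotone across the lone point where the second derivative may jump, but since $W$ stays continuous and the differential inequality holds on each side, the monotonicity survives.
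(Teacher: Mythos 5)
Your proof is correct, and it takes a genuinely different route from the paper's. Both arguments begin with the same reduction: \eqref{1.4} is equivalent to $\theta_1:=\theta'+k_0\theta\ge 0$, where $\theta$ is the function from Lemma~\ref{lemma 1}, and only inequality \eqref{1.3} plus the periodicity of $\theta$ are ever used (the nonnegativity of $\theta$ that you invoke at the outset is not actually needed, in your argument or the paper's). From there the paper is computational: it views $(\theta,\theta_1)$ as the solution of the first-order system \eqref{1.5} with forcing term $u(t)\le 0$ and periodic boundary conditions, writes out the fundamental matrix $A(t)$ and $\left(I-A(1)\right)^{-1}$ explicitly, and reads off $\theta_1\ge 0$ from the resulting integral representation, whose kernel against $u$ is nonpositive. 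Your proof replaces this computation with a qualitative comparison: $W=h\theta'-h'\theta$ satisfies $(e^{bt}W)'\le 0$ because $h>0$ solves the homogeneous equation while $\theta$ is a subsolution by \eqref{1.3}; Rolle's theorem applied to the periodic ratio $\theta/h$ produces an interior zero of $W$; monotonicity then forces $W(0)\ge 0$, which is exactly \eqref{1.4} at $t=0$ because $k_0=-h'(0)/h(0)$ and $h(0)>0$; and shifting the $C^1$ periodic extension of $\theta$ --- with the correct observation that monotonicity of the continuous function $e^{bt}W$ survives the single corner where $\theta''$ may jump, since the differential inequality holds on each side --- upgrades the endpoint estimate to every $t_0\in[0,1]$. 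The paper's route buys an explicit formula for $\theta_1$; yours buys independence from computing $A(t)$ and $\left(I-A(1)\right)^{-1}$, a conceptual explanation of why $k_0=-h'(0)/h(0)$ is the natural constant (it is the corner slope of the periodic kernel $h$), and, as your closing remark rightly notes, a clear view of why the global periodic structure is indispensable: local factoring of the operator yields only $\theta'\ge\lambda_2\theta$, which is strictly weaker than the required $\theta'\ge -k_0\theta$ because $k_0<-\lambda_2$, so some global input (Rolle in your proof, the periodic Green's matrix in the paper's) must enter.
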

 
 \begin{proof}
 Since $h'(0) < 0$ it is evident that $k_0 >0$. Let $\theta$ be the same function as defined in the proof of Lemma~\ref{lemma 1}. Then (\ref{1.3}) is valid and $\theta(0)=\theta(1)$,  $\theta'(0) = \theta'(1)$. Let 
\[
\theta_1(t) = \theta'(t) + k_0\theta(t).\]
 Then $\theta_1(0) = \theta_1(1)$. Inequality~(\ref{1.4}) is tantamount to $\theta_1(t) \geq 0$ ($t\in [0,1]$). Since 
 \[
 \theta_1'(t) = \theta''(t) + k_0\theta'(t)\leq a\theta(t) + (k_0-b)\theta'(t) = \left(a-k_0(k_0 -b)\right)\theta(t) + (k_0-b)\theta_1(t),
 \]
 letting 
\[
u(t) = \theta_1'(t) - \left(a-k_0(k_0 -b)\right)\theta(t) - (k_0-b)\theta_1(t),
\]
 we have  $u(t) \leq 0$ ($t\in [0,1]$). The functions $\theta(t)$ and $\theta_1(t)$ satisfy
 \begin{equation}\label{1.5}
 \begin{cases}
 \theta' = -k_0\theta + \theta_1  \\
 \theta_1' = \left(a-k_0(k_0 -b)\right)\theta + (k_0-b)\theta_1 + u(t)  \\
 \theta(0)=\theta(1), \, \theta_1(0) = \theta_1(1).
 \end{cases}
 \end{equation}
 The problem (\ref{1.5}) has a unique solution that may be expressed as
 \begin{equation}\label{1.6}
 \left(
 \begin{matrix}
 \theta(t)\\
 \theta_1(t)
 \end{matrix} \right)
 = \left(I -A(1)\right)^{-1}\left(\int_0^t{A(t-s)} + \int_t^1{A(1+t-s)}\right)
 \left(
 \begin{matrix}
 0\\
 u(s)
 \end{matrix} \right)ds,
 \end{equation}
 where $I$ is the $2\times 2$ unit matrix and $A(t)$ is the fundamental matrix of the system~(\ref{1.5}), that is
 \[
 A(t) = \frac{1}{\lambda_1 - \lambda_2}\left(
 \begin{smallmatrix}
 (\lambda_1 +k_0)e^{\lambda_2t} - (\lambda_2 + k_0)e^{\lambda_1t} && e^{\lambda_1t} - e^{\lambda_2t}\\
 -(\lambda_1 +k_0)(\lambda_2 + k_0)(e^{\lambda_1t} - e^{\lambda_2t}) && (\lambda_1 +k_0)e^{\lambda_1t} - (\lambda_2 + k_0)e^{\lambda_2t} 
 \end{smallmatrix}\right).
 \]
 It is readily verified that
 \[
 \left(I -A(1)\right)^{-1} = \left(\begin{matrix}
 \frac{e^{\lambda_1} + e^{\lambda_2} -2}{(e^{\lambda_1}-1)(1-e^{\lambda_2})} & -\frac{e^{\lambda_1}-e^{\lambda_2}}{(\lambda_1-\lambda_2)(e^{\lambda_1}-1)(1-e^{\lambda_2})}\\
 -\frac{\lambda_1 -\lambda_2}{e^{\lambda_1} - e^{\lambda_2}} & 0 
 \end{matrix}\right).
 \]
 With these expressions one easily calculates to get
 \begin{align*}
 \theta_1(t) &= \frac{-1}{e^{\lambda_1} - e^{\lambda_2}}\left(\int_0^t{(e^{\lambda_1(t-s)} - e^{\lambda_2(t-s)})} + \int_t^1{(e^{\lambda_1(1+t-s)} - e^{\lambda_2(1+t-s)})}\right)u(s) ds\\
  &\geq 0.
 \end{align*} 
 \end{proof} 
 
 We now define
 \[
 \al_1(t) = \al(t) +r_1h(t) - \delta/(2a), \quad \beta_1(t) = \beta(t) -r_2h(t) + \delta/(2a),
 \]
 \[
 \psi_1(\eta,t) = \al_1'(t) - k_0(\eta-\al_1(t)) = \al'- k_0(\eta-\al) +r_1(k_0h+h') - k_0\delta/(2a),
 \]
 \[
 \psi_2(\eta,t) = \beta_1'(t) + k_0(\beta_1(t)-\eta) = \beta'+ k_0(\beta-\eta) - r_2(k_0h+h') + k_0\delta/(2a).
 \]
 
 \begin{lemma}\label{lemma 3}
 Suppose that ($E_0$) holds. Then
 \[
 A=\{\eta\in C^1[0,1]:\; \al_1\leq \eta \leq \beta_1, \psi_1(\eta(t),t)\leq \eta'(t) \leq \psi_2(\eta(t),t), t\in [0,1]\}
 \]
 is a nonempty bounded closed convex subset of the Banach space $C^1[0,1]$. 
 \end{lemma}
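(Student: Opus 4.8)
The plan is to verify the four properties --- nonempty, bounded, closed, convex --- separately, and the first is the only one that genuinely invokes Lemmas~\ref{lemma 1} and~\ref{lemma 2}. I would begin by recording the structural fact that trivializes the other three: the functions $\al_1,\beta_1$ do not depend on $\eta$, and for each fixed $t$ the maps $\eta\mapsto\psi_1(\eta,t)$ and $\eta\mapsto\psi_2(\eta,t)$ are affine in $\eta$, namely
\[
\psi_1(\eta,t)=\al_1'(t)+k_0\al_1(t)-k_0\eta,\qquad \psi_2(\eta,t)=\beta_1'(t)+k_0\beta_1(t)-k_0\eta .
\]
I would also note at the start that $\beta_1-\al_1=(\beta-\al)-(r_1+r_2)h+\delta/a\geq 0$ by Lemma~\ref{lemma 1}, so the order constraint $\al_1\leq\eta\leq\beta_1$ is consistent. \emph{Convexity} is then immediate: for $\eta_0,\eta_1\in A$ and $\lambda\in[0,1]$ the combination $\eta=\lambda\eta_1+(1-\lambda)\eta_0$ satisfies $\al_1\leq\eta\leq\beta_1$ since $\al_1,\beta_1$ are fixed, and since $\psi_i$ is affine in $\eta$ one has $\psi_i(\eta(t),t)=\lambda\psi_i(\eta_1(t),t)+(1-\lambda)\psi_i(\eta_0(t),t)$, so the derivative constraints survive convex combination as well.

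The crux is \emph{nonemptiness}, and here I would exhibit an explicit member: I claim $\al_1\in A$. The order bounds $\al_1\leq\al_1\leq\beta_1$ are clear, and the lower derivative bound holds with equality, $\psi_1(\al_1(t),t)=\al_1'(t)$, straight from the definition of $\psi_1$. For the upper bound $\al_1'\leq\psi_2(\al_1,t)=\beta_1'+k_0(\beta_1-\al_1)$ I would compute
\[
(\beta_1'-\al_1')+k_0(\beta_1-\al_1)=(\beta'-\al')+k_0(\beta-\al)-(r_1+r_2)(k_0h+h')+k_0\delta/a,
\]
which is exactly the left-hand side of~(\ref{1.4}) and hence nonnegative by Lemma~\ref{lemma 2}. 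Thus $\al_1\in A$, and symmetrically $\beta_1\in A$. This is the step I expect to be the main obstacle, not because it is computationally hard but because it is the only place where both preliminary lemmas enter; everything hinges on recognizing that the required inequality is precisely~(\ref{1.4}).

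Finally, \emph{boundedness and closedness} are routine consequences of the affine structure. For $\eta\in A$ the order constraint gives $\|\eta\|_\infty\leq\max(\|\al_1\|_\infty,\|\beta_1\|_\infty)$, and substituting $\al_1\leq\eta\leq\beta_1$ into the formulas for $\psi_1,\psi_2$ yields the $\eta$-free bounds
\[
\al_1'+k_0(\al_1-\beta_1)\leq\eta'\leq\beta_1'+k_0(\beta_1-\al_1),
\]
so $\|\eta'\|_\infty$ is bounded uniformly over $A$ and hence $A$ is bounded in $C^1[0,1]$. For closedness, if $\eta_n\to\eta$ in $C^1[0,1]$ then $\eta_n\to\eta$ and $\eta_n'\to\eta'$ uniformly, so the pointwise inequalities $\al_1\leq\eta_n\leq\beta_1$ and $\psi_1(\eta_n(t),t)\leq\eta_n'(t)\leq\psi_2(\eta_n(t),t)$ pass to the limit using the continuity of $\eta\mapsto\psi_i(\eta,t)$; hence $\eta\in A$ and $A$ is closed.
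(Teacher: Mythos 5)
Your proof is correct and takes essentially the same route as the paper: the paper likewise reduces everything to nonemptiness (declaring convexity, boundedness, and closedness evident) and exhibits an explicit member, choosing $\eta=\beta_1$ — the mirror image of your choice $\al_1$ — with membership verified exactly as you do, via Lemma~\ref{lemma 1} for the ordering $\al_1\leq\beta_1$ and inequality~(\ref{1.4}) of Lemma~\ref{lemma 2} for the derivative constraint. The only difference is that you write out the routine verifications the paper leaves to the reader.
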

 \proof
 We only need to show $A\neq \emptyset$. The others are evident. 
 
 From Lemma~\ref{lemma 1} $\al_1 \leq \beta_1$. Let $\eta = \beta_1$. Then $\al_1\leq \eta \leq \beta_1$. Moreover, 
 \[ \eta'(t) + k_0 \eta(t) = \beta_1'(t) + k_0\beta_1(t).\]
 So $\eta'(t) = \psi_2(\eta,t)$. On the other hand, by Lemma~\ref{lemma 2} $\psi_1(\eta,t)\leq \psi_2(\eta,t)$. We therefore derive
 \[
 \psi_1(\eta(t),t)\leq \eta'(t) \leq \psi_2(\eta(t),t) \quad (t\in [0,1]).
 \]
 Thus $\eta\in A$ and hence $A\neq \emptyset$.
 \qed
 
 We note that, since $a>0$ and $k_0>0$, the set $A$ will increase if we replace $\delta/2$ with a larger number. As a consequence the following is true.
 
 \begin{cor}\label{Del}
 Suppose that ($E_0$) holds. Then for any $\Delta \geq \delta/2$ the set
 \[
 A_\Delta=\{\eta\in C^1[0,1]:\; \overline\al_1\leq \eta \leq \overline\beta_1, \overline\psi_1(\eta(t),t)\leq \eta'(t) \leq \overline\psi_2(\eta(t),t), t\in [0,1]\}
 \]
 is a nonempty bounded closed convex subset of $C^1[0,1]$, where
  \[
 \overline\al_1(t) = \al(t) +r_1h(t) - \Delta/a, \quad \overline\beta_1(t) = \beta(t) -r_2h(t) + \Delta/a,
 \]
 \[
 \overline\psi_1(\eta,t) = \overline\al_1'(t) - k_0(\eta-\overline\al_1(t)),
 \quad
 \overline\psi_2(\eta,t) = \overline\beta_1'(t) + k_0(\overline\beta_1(t)-\eta).
 \]
 \end{cor}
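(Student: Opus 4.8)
The plan is to obtain boundedness, closedness and convexity exactly as in Lemma~\ref{lemma 3}, and to secure nonemptiness by a monotonicity-in-$\Delta$ argument that reduces the claim to the case $\Delta = \delta/2$ already settled there. For the structural properties I would observe that each defining condition is, after rearranging, an affine pointwise inequality in $\eta$: the order constraints $\overline\al_1 \le \eta \le \overline\beta_1$ are plainly affine, while $\overline\psi_1(\eta(t),t) \le \eta'(t)$ and $\eta'(t) \le \overline\psi_2(\eta(t),t)$ read, since $\overline\psi_1$ and $\overline\psi_2$ are affine in $\eta$ with slope $-k_0$, as $\eta'(t) + k_0\eta(t) \ge (\text{function of } t)$ and $\eta'(t) + k_0\eta(t) \le (\text{function of } t)$. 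Convexity is then immediate, boundedness follows from $\overline\al_1 \le \eta \le \overline\beta_1$ on the compact interval together with the resulting two-sided bound on $\eta'$, and closedness holds because all the functionals involved are continuous on $C^1[0,1]$.

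The core step is the monotonicity $\Delta \le \Delta' \Rightarrow A_\Delta \subseteq A_{\Delta'}$. First I would note that differentiation kills the additive constants: $\overline\al_1'(t) = \al'(t) + r_1 h'(t)$ and $\overline\beta_1'(t) = \beta'(t) - r_2 h'(t)$ do not depend on $\Delta$. Substituting, the $\Delta$-dependence of the two bounding functions enters only through a single additive term,
\[
\overline\psi_1(\eta,t) = \al' + r_1 h' + k_0(\al + r_1 h - \eta) - k_0\Delta/a, \quad
\overline\psi_2(\eta,t) = \beta' - r_2 h' + k_0(\beta - r_2 h - \eta) + k_0\Delta/a.
\]
Since $a > 0$ and $k_0 > 0$, raising $\Delta$ decreases $\overline\al_1$ and $\overline\psi_1(\eta,\cdot)$ while it increases $\overline\beta_1$ and $\overline\psi_2(\eta,\cdot)$, the last two evaluated at the same argument $\eta$. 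Hence for any fixed $\eta \in A_\Delta$ every one of the four defining inequalities only relaxes when $\Delta$ is raised to $\Delta'$, so $\eta \in A_{\Delta'}$; this gives the asserted inclusion.

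Finally I would specialize $\Delta = \delta/2$: then $\Delta/a = \delta/(2a)$, so $\overline\al_1 = \al_1$, $\overline\beta_1 = \beta_1$, and correspondingly $\overline\psi_i = \psi_i$ ($i = 1,2$); that is, $A_{\delta/2} = A$. By Lemma~\ref{lemma 3} this set is nonempty, whence the monotonicity yields $A_\Delta \supseteq A_{\delta/2} = A \neq \emptyset$ for every $\Delta \ge \delta/2$. I do not anticipate a genuine obstacle here; the only point requiring care is the second paragraph, namely checking that the derivative constraints relax rather than tighten---which could be misread because $\overline\psi_1, \overline\psi_2$ depend on $\Delta$ both through their explicit constant and, a priori, through $\overline\al_1, \overline\beta_1$---but differentiation removes the latter dependence and leaves only the harmless additive $\mp k_0\Delta/a$.
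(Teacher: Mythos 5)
Your proposal is correct and takes essentially the same route as the paper: the paper obtains the corollary from Lemma~\ref{lemma 3} by exactly the observation that, since $a>0$ and $k_0>0$, replacing $\delta/2$ by any $\Delta\geq\delta/2$ only enlarges the set, so $A_\Delta\supseteq A\neq\emptyset$. Your write-up simply makes explicit the monotonicity computation (noting that differentiation removes the $\Delta$-dependence of $\overline\al_1$, $\overline\beta_1$) and the structural properties that the paper treats as evident.
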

 
 \section{Existence Results}
 
 For $\eta\in C^1[0,1]$, consider the linear problem
 \begin{equation}\label{2.1}
 \begin{cases}
 -x'' = f(t,\eta,\eta') -a(x-\eta) + b(x'-\eta') \qquad (t\in [0,1]) \\
 x(0)=x(1), \, x'(0) = x'(1).
 \end{cases}
 \end{equation}
 Let $a,b\in \bR$ with $a>0$. Then problem~(\ref{2.1}) has a unique solution $x(t)$ and
 \beq\label{2.2}
 x(t) =(\int_0^th(t-s) + \int_t^1h(1+t-s))\left(f(s,\eta(s),\eta'(s)) + a\eta(s) -b\eta'(s)\right)ds,
 \endeq
 \beq\label{2.2'}
 x'(t) =(\int_0^th'(t-s) + \int_t^1h'(1+t-s))\left(f(s,\eta(s),\eta'(s)) + a\eta(s) -b\eta'(s)\right)ds.
 \endeq
 Formula~(\ref{2.2}) defines a continuous mapping $T$: $\eta(t) \mapsto x(t)$ on $C^1[0,1]$. It is clear that problem~(\ref{1.1}) has a solution if and only if the mapping $T$ has a fixed point in $C^1[0,1]$. Our investigation is under the following hypothesis
 \begin{itemize}
 \item[($E_1$)] There are a lower solution $\al(t)$ and an upper solution $\beta(t)$ of (\ref{1.1}) such that ($E_0$) holds, and there is $\Delta \geq \delta/2$ such that, for $\eta\in A_\Delta$ and $t\in [0,1]$,
 \[f(t,\eta, \eta') + \al''(t) \geq -a(\eta -\al) + b(\eta' - \al') -\Delta, \]
 \[f(t,\eta, \eta') + \beta''(t) \leq a(\beta -\eta) - b(\beta' - \eta') +\Delta. \]
 \end{itemize}
  In particular, ($E_1$) is satisfied if the following condition holds.
 \begin{itemize}
 \item[($E'_1$)] There are a lower solution $\al(t)$ and an upper solution $\beta(t)$ of (\ref{1.1}) such that ($E_0$) holds, and there is $\Delta \geq \delta/2$ such that, for $\eta\in A_\Delta$ and $t\in [0,1]$,
 \[f(t,\eta, \eta') - f(t,\al, \al') \geq -a(\eta -\al) + b(\eta' - \al') -\Delta, \]
 \[f(t,\beta, \beta') - f(t,\eta, \eta') \geq -a(\beta -\eta) + b(\beta' - \eta') -\Delta. \]
  \end{itemize}
 
 \bethm\label{thm 1}
 If ($E_1$) holds then the problem~(\ref{1.1}) has a solution $x(t)$ in $A_\Delta$.
 \enthm
  \proof
  From Corollary~\ref{Del} $A_\Delta \neq \emptyset$ and is 
  bounded, closed and convex. We show that the mapping $T$ is a completely continuous operator on $A_\Delta$. Then by the Schauder's fixed point theorem, $T$ has a fixed point in $A_\Delta$ and we will be done. Complete continuity of $T$ may be easily seen from the formulas~(\ref{2.2}) and (\ref{2.2'}). We now show $T(A_\Delta)\subset A_\Delta$.
  
  From (\ref{1.2}) $h(t) > 0$ on $[0,1]$. We claim 
  \beq\label{2.3}
   h'(t) + k_0 h(t) \geq 0 \quad (t\in [0,1]). \endeq
Let 
\[h_1(t) = \frac{h'(t)}{h(t)} = \frac{\lambda_1(1-e^{\lambda_2})e^{\lambda_1t} + \lambda_2(e^{\lambda_1}-1)e^{\lambda_2t}}{(1-e^{\lambda_2})e^{\lambda_1t} + (e^{\lambda_1}-1)e^{\lambda_2t}}.
\]
 Then 
\[ 
h_1'(t) = \frac{(\lambda_1-\lambda_2)^2(1-e^{\lambda_2})(e^{\lambda_1}-1)e^{(\lambda_1+\lambda_2)t}}{[(1-e^{\lambda_2})e^{\lambda_1t} + (e^{\lambda_1}-1)e^{\lambda_2t}]^2} >0 \quad
 (t\in [0,1]). 
 \]
 So $h_1(t)\geq h_1(0) = -k_0$  ($t\in [0,1]$). This shows that our claim (\ref{2.3}) holds. Define
 \[
 m(t) = (\int_0^th(t-s) + \int_t^1h(1+t-s))\left(-\al''(s)) + a\al(s) -b\al'(s) - \Delta\right)ds,
 \]
 \[
 M(t) = (\int_0^th(t-s) + \int_t^1h(1+t-s))\left(-\beta''(s)) + a\beta(s) -b\beta'(s) + \Delta\right)ds,
 \]
\[
 n(t) = [\int_0^t(h'(t-s) + k_0h(t-s)) + \int_t^1(h'(1+t-s) + k_0h(1 + t-s))]\left(-\al''(s)) + a\al(s) -b\al'(s) - \Delta\right)ds,
 \]
\[
 N(t) = [\int_0^t(h'(t-s) + k_0h(t-s)) + \int_t^1(h'(1+t-s) + k_0h(1 + t-s))]\left(-\beta''(s)) + a\beta(s) -b\beta'(s) + \Delta\right)ds.
 \]
 Then, from ($E_1$), any $\eta\in A_\Delta$ satisfies
\[
m(t) \leq T\eta (t) \leq M(t), \quad n(t) \leq (T\eta)'(t) + k_0 T\eta (t) \leq N(t) \quad (t\in [0,1]).
\]
On the other hand, one can also easily check that the following identities hold.
\[
m(t) = \al(t) +r_1h(t) - \Delta/a =  \overline\al_1(t), \quad M(t) = \beta(t) -r_2h(t) + \Delta/a = \overline\beta_1(t),
 \]
 \[
 n(t) = \al'(t)+ k_0\al(t) +r_1(k_0h(t)+h'(t)) - k_0\Delta/a = \overline\psi_1(T\eta,t) + k_0T\eta(t),
 \]
\[
 N(t) = \beta'(t)+ k_0\beta(t) - r_2(k_0h(t)+h'(t)) + k_0\Delta/a = \overline\psi_2(T\eta,t) + k_0T\eta(t).
 \]
 These finally lead to $T\eta \in A_\Delta$. Therefore we have shown $T(A_\Delta)\subset A_\Delta$. The proof is complete.
\qed

We note that in Theorem~\ref{thm 1} we do not require $\al(t) \leq \beta(t)$ and we do not require $r_1\geq 0$ or $r_2 \geq 0$.  We also note that, if there are lower and upper solutions $\al(t)$ and $\beta(t)$ for problem~(\ref{1.1}), one can always choose the numbers $a,b$ and $\delta$ so that condition~($E_0$) is valid. The problem is whether we can choose them so that ($E_1$) is satisfied. In the sequel we will focus on the case that the lower and upper solutions $\al(t)$ and $\beta(t)$ satisfy $\al(t) \leq \beta(t)$, $r_1\geq 0$ and $r_2 \geq 0$.

\begin{exx}\label{example sin} (Forced pendulum with curvature operator)
Consider the $\phi$-Laplacian periodic boundary value problem
\begin{equation}\label{sin -}
\begin{cases}
(\frac{x'}{\sqrt{1+x'^2}})' + \mu(t) \sin x - \ell(t,x) x' = e(t) \\
x(0) = x(1), x'(0) = x'(1),
\end{cases}
\end{equation}
where $\mu(t),e(t)\in C[0,1]$, $\ell(t,x)\in C([0,1]\times \bR)$. Suppose that $\mu(t) \geq |e(t)|$ for $t\in[0,1]$. This problem has constant lower and upper solutions $\al =\pi/2$ and $\beta= 3\pi/2$. Let 
\[
\ell_0(t) = \min\{\ell(t,x):\; \pi/2\leq x\leq 3\pi/2 \}.
\]
 We show that if there is $r>0$ such that $r\ell_0(t) \geq \mu(t)$ ($t\in [0,1]$), then there is a solution of (\ref{sin -}) between $\al$ and $\beta$.
\end{exx}

\proof
We rewrite the problem in the standard form (\ref{1.1}). The right side function is indeed
\[
f(t,x,y) = (1+y^2)^{3/2}\left[\mu(t) \sin x - \ell(t,x) y - e(t) \right].
\]
Let $d$ be a constant such that $d\geq (1+\pi^2r^2)^{3/2}\ell_{max}$, where \[
\ell_{max} = \max\{\ell(t,x), \; t\in [0,1], \pi/2 \leq x\leq 3\pi/2\}.
\]
 We choose $a=rd$ and $b=-d$. Clearly, $(E_0)$ is satisfied with $\delta =0$. We have
\[
k_0 = -\lambda_2 - \frac{(\lambda_1-\lambda_2)(1-e^{\lambda_2})}{(e^{\lambda_1}-e^{\lambda_2})}\leq -\lambda_2
 = \frac{2a}{-b + \sqrt{b^2 + 4a}} =  \frac{2rd}{d + \sqrt{d^2 + 4rd}} \leq r.
\]
 For these choices of $a$ and $b$  we take $\Delta =0$. Then
\begin{align*}
A_\Delta = A_0 &\subset \{\eta\in C^1[0,1]: \; \pi/2 \leq \eta \leq 3\pi/2, -r(\eta - \pi/2)\leq \eta' \leq r(3\pi/2-\eta)\} \\ 
                          &\subset \{\eta\in C^1[0,1]: \; \pi/2 \leq \eta \leq 3\pi/2, -r \pi \leq \eta' \leq r\pi\}.
\end{align*}
We now show that $(E_1)$ holds. Precisely, for $\eta \in A_0$ we show
\begin{equation}\label{for alpha}
f(t,\eta, \eta') + a (\eta - \pi/2) - b \eta' \geq 0
\end{equation}
\begin{equation}\label{for beta}
 a (3\pi/2 - \eta) + b \eta' - f(t,\eta, \eta') \geq 0.
\end{equation}
Now let $\eta \in A_0$. We first note that 
\[
\mu(t) \sin \eta \geq -\mu(t) (\eta -\pi/2) + \mu(t)
\]
 simply by the mean value theorem. Using the condition $\mu(t) - e(t) \geq 0$ we have
\[
f(t,\eta, \eta') \geq (1+ \eta'^2)^{3/2} [-\mu(t) (\eta - \pi/2) - \ell(t,\eta) \eta' ].
\]
Since $b=-d$ and $d - (1+ \eta'^2)^{3/2}\ell(t,\eta)\geq 0$ from the assumption,
\begin{align*}
&f(t,\eta, \eta') + a (\eta - \pi/2) - b \eta' \\
&\geq -(1+ \eta'^2)^{3/2} \mu(t) (\eta - \pi/2) + a (\eta - \pi/2) +[d - (1+ \eta'^2)^{3/2}\ell(t,\eta)] \eta'\\
    &\geq  -(1+ \eta'^2)^{3/2} \mu(t) (\eta - \pi/2) + a (\eta - \pi/2) -[d - (1+ \eta'^2)^{3/2}\ell(t,\eta)] r(\eta - \pi/2)\\
 &= (1+ \eta'^2)^{3/2} \left(r\ell(t,\eta) -\mu(t)\right) (\eta - \pi/2) + (a- rd) (\eta - \pi/2)  \geq 0
\end{align*}
from the choices of $r$ and $d$. Thus we have shown that (\ref{for alpha}) is valid for $\eta \in A_0$.

Similarly, one may verify that (\ref{for beta}) is also valid for $\eta \in A_0$. Therefore $(E_1)$, with $\Delta = 0$, holds for the problem. The conclusion follows from Theorem~\ref{thm 1}.
\qed 

About problem~(\ref{sin -}) we refer to \cite{M-W} and \cite{B-J-M} for relevant results.

We now consider an example for which an upper solution that one can find is not constant.

\begin{exx}\label{example1} (Singular nonlinearities of attractive type)
Let $p(t), e(t)\in C[0,1]$ 
and $\lambda>0$. Consider
\begin{equation}\label{-lambda}
\begin{cases}
x'' + p(t) x^{-\lambda} = e(t) \\
x(0) = x(1), x'(0) = x'(1).
\end{cases}
\end{equation}
If $\int_0^1{e(t)}dt>0$ and there is $C>0$ such that $Cp(t) \geq e(t)$ on $[0,1]$, then problem~(\ref{-lambda}) has a solution $x(t) >0$ ($t\in [0,1]$).
\end{exx}
\proof
Let $c>0$ be the number satisfying $c^\lambda C =1$. Then $\al(t) = c$ is a lower solution of (\ref{-lambda}). Let 
\[
n = \frac{1}{2}\int_0^1{e(t)}dt
\]
 and let $d\geq c$ be a number such that $p(t) x^{-\lambda} \leq 2n$ ($t\in [0,1]$) whenever $x\geq d$. Define
\[
\beta(t) = m + nt(1-t) -[(1-t)\int_0^t{se(s)}ds + t\int_t^1{(1-s)e(s)}ds],
\]
where $m>0$ is a constant sufficiently large so that $\min_{t\in [0,1]}\beta(t) \geq d$. Then 
\[
\beta''(t) = -2n + e(t).
\]
One may easily check that $\beta(t)$ is an upper solution of (\ref{-lambda}). Moreover $\beta\geq \al$ and 
\[
\beta('1) -\beta'(0) = -2n + \int_0^1{e(t)}dt = 0.
\]
 On the other hand
\begin{equation}\label{inequality}
p(t)x_1^{-\lambda} - p(t)x_2^{-\lambda} \geq -\frac{\lambda p_{\max}}{c^{\lambda +1}}(x_1 -x_2)
\end{equation}
for all $x_1 \geq x_2\geq c$, where $p_{\max} = \max_{t\in [0,1]}|p(t)|$.

Now set 
\[
f(t,x,y) = 
\begin{cases}
p(t) x^{-\lambda} - e(t), & \text{ if } x\geq c\\
p(t) c^{-\lambda} - e(t), & \text{ if } x< c
\end{cases} 
\]
Then $f\in C([0,1]\times \bR^2)$. With this $f$ it is evident that any solution $x(t)$ of problem~(\ref{1.1}) is also a solution of (\ref{-lambda}) if $x(t) \geq c$ ($t\in [0,1]$). Clearly, $\al$ and $\beta$ are lower and upper solutions of (\ref{1.1}). Take 
\[
a = \frac{\lambda p_{\max}}{c^{\lambda +1}}.
\]
 Inequality~(\ref{inequality}) implies that condition ($E_0$) holds with this $a$ and with $b = \delta =0$. Since 
\[
r_1 = \al'(0)-\al'(1) =0 =r_2 = \beta'(1) -\beta'(0),
\]
 taking $\Delta = 0$ we have 
\[
A_\Delta \subset \{\eta \in C^1[0,1]: \; \al \leq \eta \leq \beta\}.
\]
 Again (\ref{inequality}) shows that condition ($E'_1$) is satisfied. Thus, from Theorem~\ref{thm 1}, (\ref{1.1}) has a solution $x(t)$ between $\al$ and $\beta$. Since $x(t) \geq \al =c$, it is a solution of (\ref{-lambda}). 
\qed

\begin{remark}
Suppose that $p(t)\geq 0$ on $[0,1]$. Then the condition $\int_0^1{e(t)}dt>0$ is in fact necessary for (\ref{-lambda}) to have a positive solution. To see this one needs only to integrate the left hand side of the equation in (\ref{-lambda}), assuming that $x = x(t) \geq 0$ is a solution. In particular, when $p(t)$ is a positive constant function, then Problem~\ref{-lambda} admits a positive solution if and only if $\int_0^1{e(t)}dt>0$. This indeed  was the essential case studied by Lazer and Solimini in \cite{L-S}.
\end{remark}

We now consider special cases of Theorem~\ref{thm 1}. We consider the following conditions.

\begin{itemize}
\item[($E_2$)] Problem~\ref{1.1} has lower and upper solutions $\al(t)$ and $\beta(t)$ that satisfy $\al(t) \leq \beta(t)$, $r_1 = \al'(0)-\al'(1) \geq 0$ and $r_2 =\beta'(1) - \beta'(0) \geq 0$. The function $f(t, x, y)$ satisfies a local Lipschitz condition near the curves $(t, \al, \al')$ and $(t, \beta, \beta')$ ($t\in [0,1]$), that is that there are constants $\mu>0$ and $\ell>0$ such that 
\[
|f(t,x_1,y_1) - f(t,x_2,y_2)| \leq \ell |x_1-x_2| + \ell |y_1 - y_2|
\]
whenever $(t,x_i,y_i)\in G_{\al,\mu}$ or $(t,x_i,y_i)\in G_{\beta,\mu}$, $i=1,2$. Here, for $\gamma\in C^1[0,1]$, $G_{\gamma,\mu}$ is the subset of $\bR^3$ defined by
\[
G_{\gamma,\mu} = \{(t,x,y):\; t\in [0,1], \gamma(t) - \mu \leq x \leq \gamma(t) + \mu, \gamma'(t) - \mu \leq y \leq \gamma'(t) + \mu\}.
\] 
\end{itemize} 

\begin{itemize}
\item[($E_3$)] For the lower and upper solutions $\al(t)$ and $\beta(t)$ there are constants $L>0$ and $K>0$ and a function $c(t)\geq 0$ on $[0,1]$ with $c(t)(\beta(t) - \al(t)) \leq c < 1$ ($t\in [0,1]$) such that
\[
f(t,x,\al'(t)-z) - f(t,x,\al'(t)) \geq -c(t)z^2 - Lz -K
\]
\[
f(t,x,\beta'(t)) - f(t,x,\beta'(t)+z) \geq -c(t)z^2 - Lz -K
\]
whenever $t\in [0,1]$, $\al(t)\leq x\leq \beta(t)$ and $z\geq 0$.
\end{itemize} 

\begin{thm}\label{thm 2}
Suppose that ($E_2$) and ($E_3$) hold. Then problem (\ref{1.1}) has a solution $x(t)$ satisfying $\al(t)\leq x(t) \leq \beta(t)$ ($t\in [0,1]$).
\end{thm}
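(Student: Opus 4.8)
The plan is to deduce Theorem~\ref{thm 2} from Theorem~\ref{thm 1} by replacing $f$ with a suitable modification $\bar f$ that agrees with $f$ on the region where a solution trapped between $\al$ and $\beta$ must live, and for which condition~($E_1$) can be verified. First I would fix an a priori bound $N$ on the derivative of any such solution, saturate the $y$-variable outside the band $\al'(t)-N-1\le y\le\beta'(t)+N+1$ (freezing $f$ at the band edges), project the $x$-variable into the strip by $\sigma(t,x)=\max\{\al(t),\min\{x,\beta(t)\}\}$, and add a bounded penalty that vanishes on the strip and is negative for $x>\beta$ and positive for $x<\al$. Along the two curves nothing is changed, so $\al$ and $\beta$ remain lower and upper solutions for $\bar f$, and ($E_0$) holds for $\bar f$ because $\bar f$ is bounded. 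Once ($E_1$) is verified for $\bar f$, Theorem~\ref{thm 1} produces a solution $x\in A_\Delta$ of $-x''=\bar f(t,x,x')$ with the periodic data.

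The crux is the a priori derivative bound, and this is exactly the role of ($E_3$) as a Bernstein-type growth restriction. For the lower tail I would estimate how far below $\al'$ the derivative can dip: on the equation $-x''=f(t,x,x')$ the first inequality of ($E_3$), with $z=\al'(t)-x'(t)\ge 0$, produces a Riccati-type differential inequality for $z$ whose quadratic coefficient is $c(t)$. Passing to $x$ as the independent variable and integrating, the quadratic term gets weighted by the variation of $x$ across the relevant region, which is controlled by the local gap $\beta(t)-\al(t)$; the smallness hypothesis $c(t)(\beta(t)-\al(t))\le c<1$ is precisely what prevents the estimate from blowing up and bounds $z$. The upper tail is symmetric, via the second inequality of ($E_3$), bounding $x'(t)-\beta'(t)$ from above. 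Since $\al'$ and $\beta'$ are themselves bounded, this yields $|x'|\le N$ with $N$ depending only on $c,L,K,\|\al'\|,\|\beta'\|$ and $\max_t\beta-\min_t\al$, hence independent of the particular solution; because the saturation in $\bar f$ preserves the one-sided inequalities of ($E_3$) for every $z\ge 0$ (trivially beyond the band, where $\bar f$ is constant in $y$), the same bound applies to solutions of the modified problem and lies strictly inside the band.

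It then remains to check ($E_1$) for $\bar f$ and to confine the solution to the strip. For ($E_1$) the two inequalities are easy on the side where $\eta$ is far from the relevant curve, where the linear terms $-a(\eta-\al)$ or $-a(\beta-\eta)$ dominate; on the side where $\eta$ is close to a curve I would invoke the local Lipschitz hypothesis ($E_2$) to control the $x$-dependence together with the saturated one-sided bounds of ($E_3$) to control the $y$-dependence over the bounded set $A_\Delta$. Confinement is then the standard comparison argument: at a positive interior maximum of $x-\beta$ the penalty term makes $\bar f$ strictly smaller than $-\beta''$, forcing $(x-\beta)''>0$ and contradicting the maximality; the periodic data together with the corner inequalities $r_2=\beta'(1)-\beta'(0)\ge 0$ and $r_1=\al'(0)-\al'(1)\ge 0$ are exactly what rule out the maximum occurring at the junction $t=0\equiv 1$, and the lower bound $\al\le x$ is symmetric. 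Once $\al\le x\le\beta$ and $|x'|\le N$ are established, both modifications are inactive and $x$ solves the original problem.

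I expect the a priori bound of the second paragraph to be the main obstacle: the argument must be arranged so that the quadratic growth permitted by ($E_3$) is defeated precisely by the factor $c(t)(\beta(t)-\al(t))<1$, which requires keeping careful track of the variable coefficient $c(t)$ and of the fact that the comparison is made against the moving curves $\al'(t)$ and $\beta'(t)$ rather than against constants.
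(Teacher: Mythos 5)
Your route---saturate $f$ in $y$, project $x$ onto $[\al,\beta]$, add a penalty, prove an a priori derivative bound from ($E_3$), then apply Theorem~\ref{thm 1} plus a maximum principle---is genuinely different from the paper's, which never needs an a priori estimate on solutions: the paper verifies ($E_1$) directly on $A_\Delta$ under strict inequalities (Step 1), removes strictness by perturbing $f$ and applying Arzel\`a--Ascoli (Step 2), and treats touching $\al\leq\beta$ by the $x$-projection plus a local-uniqueness argument (Step 3). As written, however, your proposal has two genuine gaps. The first concerns the a priori bound. In the Bernstein--Nagumo integration of $z'\geq -c(t)z^2-Lz-C$ (with $z=\al'-x'\geq 0$), one integrates $\frac{d}{dt}\Phi(z(t))\geq -z=(x-\al)'$ where $\Phi(z)=\int_0^z \frac{s\,ds}{\bar{c}s^2+Ls+C}$, and this forces $\bar{c}=\sup_t c(t)$: because $c(t)$ is not constant, $\frac{zz'}{c(t)z^2+Lz+C}$ is not a total derivative, so the quadratic coefficient entering the estimate is the supremum of $c$, not the pointwise product $c(t)(\beta(t)-\al(t))$. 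But ($E_3$) bounds only the product and allows $c(t)$ to be unbounded wherever $\beta-\al$ vanishes (touching lower and upper solutions are permitted, and are exactly the case the paper handles in Step 3); there your estimate gives nothing. Moreover, when $\sup c<\infty$ the integral $\Phi$ diverges logarithmically, so the bound needs no smallness at all: the hypothesis $c<1$ simply does not play the role you assign to it.

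The second and more central gap: the verification of ($E_1$) for $\bar f$, which is the heart of the theorem, is asserted rather than carried out, and your sketch fails in precisely the dangerous regime, namely $\eta$ at or slightly below $\al$ with $\eta'$ well below $\al'$. There the linear terms give no slack: for $\eta\in A_\Delta$ one only has $a(\eta-\al)+\Delta\geq a(\eta-\overline\al_1)+ar_1h$, which vanishes at the bottom of the tube when $r_1=0$; your penalty is continuous and vanishes at $x=\al$, hence is arbitrarily small there; and ($E_2$) is unusable once $|\eta'-\al'|>\mu$, where ($E_3$) still permits a drop of size $c(t)|\eta'-\al'|^2+L|\eta'-\al'|+K$ (a large constant after saturation). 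Indeed, with $b=0$ the inequality fails at $\eta=\overline\al_1$ for band values of $\eta'$ far from $\al'$ no matter how large $a$ and $\Delta$ are, and taking $b$ of either sign creates the symmetric failure on the other side; so no choice of $(a,b,\Delta)$ works if one uses only boundedness of $\bar f$ and the box bounds $\overline\al_1\leq\eta\leq\overline\beta_1$. What rescues this regime is the derivative constraint built into $A_\Delta$, $\eta'\geq\overline\psi_1(\eta(t),t)$, which yields $\al'-\eta'\leq k_0\bigl((\eta-\al)+\Delta/a\bigr)$ and so ties the derivative deviation to the pointwise gap; combined with ($E_3$) this linearizes the quadratic term, and this is exactly where $c(t)(\beta-\al)\leq c<1$ is genuinely used in the paper (Cases 2--3 of Step 1, with $b=-a/N+N$, $k_0\leq N$, $N\geq(\hat K+L\mu)/((1-c)\mu)$). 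Your proposal treats $A_\Delta$ as merely a bounded convex set and never invokes this constraint, so the key condition ($E_1$) cannot be established from the ingredients you list.
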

 We postpone the proof of Theorem~\ref{thm 2} to the next section. Here we look at some special cases. First, we have the following variant of Theorem~\ref{thm 2}.
 
\begin{thm}\label{thm 3}
Suppose that ($E_2$) and the following ($E'_3$) hold.
\begin{itemize}
\item[($E'_3$)] For the lower and upper solutions $\al(t)$ and $\beta(t)$ there are constants $L>0$ and $K>0$ and a function $c(t)\in C[0,1]$ with $c(t)(\beta(t) - \al(t)) \leq c < 1$ ($t\in [0,1]$) such that
\[
f(t,x,\al'(t)) - f(t,x,\al'(t)+z) \leq c(t)z^2 + Lz +K
\]
\[
f(t,x,\beta'(t)-z) - f(t,x,\beta'(t)) \leq c(t)z^2 + Lz +K
\]
whenever $t\in [0,1]$, $\al(t)\leq x\leq \beta(t)$ and $z\geq 0$.
\end{itemize} 
 Then problem (\ref{1.1}) has a solution $x(t)$ satisfying $\al(t)\leq x(t) \leq \beta(t)$ ($t\in [0,1]$).
\end{thm}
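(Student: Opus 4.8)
The plan is to reduce Theorem~\ref{thm 3} to Theorem~\ref{thm 2} by a time-reversal change of variable, exploiting the fact that conditions ($E'_3$) and ($E_3$) are mirror images of one another under reflection of the independent variable. Concretely, I would set $u(t) = x(1-t)$. A direct computation gives $u'(t) = -x'(1-t)$ and $u''(t) = x''(1-t)$, so $x$ solves (\ref{1.1}) if and only if $u$ solves the reflected problem
\[
-u'' = g(t,u,u'), \qquad u(0) = u(1), \ u'(0) = u'(1),
\]
where $g(t,x,y) = f(1-t,x,-y) \in C([0,1]\times\bR^2,\bR)$. This correspondence is a bijection between solutions, and it carries the band $\alpha \leq x \leq \beta$ to the band $\hat\alpha \leq u \leq \hat\beta$, where $\hat\alpha(t) = \alpha(1-t)$ and $\hat\beta(t) = \beta(1-t)$.

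First I would verify that $\hat\alpha$ and $\hat\beta$ are lower and upper solutions of the $g$-problem: this follows by the same computation, since $-\hat\alpha''(t) = -\alpha''(1-t) \leq f(1-t,\hat\alpha(t),-\hat\alpha'(t)) = g(t,\hat\alpha(t),\hat\alpha'(t))$, and symmetrically for $\hat\beta$. Reflection preserves the ordering $\hat\alpha \leq \hat\beta$ and the periodic boundary data, and a short check of the endpoint derivatives gives $\hat r_1 = \hat\alpha'(0) - \hat\alpha'(1) = \alpha'(0) - \alpha'(1) = r_1 \geq 0$ and likewise $\hat r_2 = r_2 \geq 0$. The local Lipschitz condition of ($E_2$) transfers with the same constants $\mu,\ell$, because $(t,x,y)\in G_{\hat\alpha,\mu}$ forces $(1-t,x,-y)\in G_{\alpha,\mu}$ while the modulus $|(-y_1)-(-y_2)| = |y_1-y_2|$ is unchanged, and similarly near $\hat\beta$. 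Hence ($E_2$) holds for $g,\hat\alpha,\hat\beta$.

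The crux is to check that ($E'_3$) for $f$ translates into ($E_3$) for $g$, with the reflected weight $\hat c(t) = c(1-t)$ (which remains continuous and still satisfies $\hat c(t)(\hat\beta - \hat\alpha) \leq c < 1$) and the same $L,K$. Writing $\tau = 1-t$ and using $\hat\alpha'(t) = -\alpha'(\tau)$, one finds $g(t,x,\hat\alpha'(t)-z) = f(\tau,x,\alpha'(\tau)+z)$ and $g(t,x,\hat\alpha'(t)) = f(\tau,x,\alpha'(\tau))$, so the first inequality of ($E_3$) for $g$ is precisely the first inequality of ($E'_3$) for $f$ after rearrangement; the $\beta$-inequalities match in the same way via $\hat\beta'(t) = -\beta'(\tau)$. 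This sign-chasing is the only delicate point, and it is where I would be most careful, confirming that the reflection sends the ``$+z$'' conditions of ($E'_3$) to the corresponding conditions of ($E_3$) over the correct range $\alpha(\tau)\leq x\leq \beta(\tau)$, $z\geq 0$.

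With ($E_2$) and ($E_3$) established for the $g$-problem, Theorem~\ref{thm 2} yields a solution $u(t)$ with $\hat\alpha(t) \leq u(t) \leq \hat\beta(t)$. Undoing the substitution, $x(t) = u(1-t)$ is then a solution of (\ref{1.1}) satisfying $\alpha(t) \leq x(t) \leq \beta(t)$, which is the desired conclusion.
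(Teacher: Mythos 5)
Your proposal is correct and is essentially identical to the paper's own proof: the paper also reduces Theorem~\ref{thm 3} to Theorem~\ref{thm 2} via the time reversal $y(t)=x(1-t)$, $F(t,y,y')=f(1-t,y,-y')$, reflecting $\al,\beta,c$ and checking that ($E_2$) transfers and that ($E'_3$) for $f$ becomes ($E_3$) for the reflected problem. Your sign computations (including $\hat r_1=r_1$, $\hat r_2=r_2$, and the matching of the $\pm z$ inequalities) agree with the paper's.
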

\proof
In (\ref{1.1}) we let $y(t) = x(1-t)$ and let $F(t,y,y') = f(1-t, y, -y')$. Then (\ref{1.1}) is equivalent to the \bvp 
\begin{equation}\label{2.4}
 \begin{cases}
 -y'' = F(t,y,y') \qquad (t\in [0,1]) \\
 y(0)=y(1), \, y'(0) = y'(1).
 \end{cases}
 \end{equation}
 (\ref{1.1}) has a solution $x(t)$ satisfying $\al(t)\leq x(t) \leq \beta(t)$ if and only if (\ref{2.4}) has a solution $y(t)$ satisfying $\al(1-t)\leq y(t) \leq \beta(1-t)$. If ($E_2$) holds then $\al_0(t) = \al(1-t)$ and $\beta_0(t) = \beta(1-t)$ are respectively lower and upper solutions of (\ref{2.4}) satisfying $\al_0 \leq \beta_0$, $\al'_0(0)-\al'_0(1) = r_1 \geq 0$ and $\beta'_0(1)-\beta'_0(0) = r_2 \geq 0$. So ($E_2$), with $\al, \beta$ replaced by $\al_0, \beta_0$ and $f$ replaced by $F$, is valid for problem (\ref{2.4}). We now show that ($E_3$) also holds with respect to $F$, $\al_0$ and $\beta_0$. In fact, from ($E'_3$) we derive 
 \[
 F(t,y,\al'_0(t) -z) -F(t,y,\al'_0(t))
  \geq f(1-t,y,\al'(1-t)+z) - f(1-t,y,\al'(1-t)) \geq -c(1-t)z^2 - Lz -K,
\]
\[
 F(t,y,\beta'_0(t)) -F(t,y,\beta'_0(t)+z) 
 \geq f(1-t,y,\beta'(1-t)) - f(1-t,y,\beta'(1-t)-z) \geq -c(1-t)z^2 -Lz -K
 \]
and 
\[
c(1-t)(\beta_0(t) -\al_0(t)) = c(1-t)(\beta(1-t) - \al(1-t)) \leq c <1\quad (t\in [0,1]). 
\]
This shows our conclusion. Then, using Theorem~\ref{thm 2}, we are ensured that problem~(\ref{2.4}) has a solution between $\al_0$ and $\beta_0$. The proof is complete.
\qed

\bethm\label{cor 1}
Suppose that $f = f_1 + f_2$ and ($E_2$) holds. Denote 
\[
\Gamma =\{(t,x): \; t\in [0,1], \al(t)\leq x \leq \beta(t)\}.
\]
 If there is a function $c(t)\in C[0,1]$ with $c(t)(\beta(t) - \al(t)) \leq c < 1$ ($t\in [0,1]$) such that one of the following conditions is satisfied, then the problem (\ref{1.1}) has a solution $x(t)$ satisfying $\al(t)\leq x(t) \leq \beta(t)$ ($t\in [0,1]$).
\begin{enumerate}
\item\label{1}  $\limsup_{|y|\to \infty}\frac{yf_1(t,x,y)}{|y|^3}\leq c(t)$  uniformly on $\Gamma$ and $f_2(t,x,y)$ is nonincreasing in $y$ for each $(t,x)\in \Gamma$;
\item\label{2}  $\liminf_{|y|\to \infty}\frac{yf_1(t,x,y)}{|y|^3}\geq -c(t)$  uniformly on $\Gamma$ and $f_2(t,x,y)$ is nondecreasing in $y$ for each $(t,x)\in \Gamma$.
\end{enumerate}
\enthm
\proof
From Theorems~\ref{thm 2} and \ref{thm 3} it suffices to show either ($E_3$) or ($E'_3$) holds. We show that condition~(\ref{1}) implies ($E_3$). The implication (\ref{2}) to ($E'_3$) can be shown similarly.

Suppose that condition~(\ref{1}) holds. Without loose of generality we may assume 
\[
\limsup_{y\to\pm \infty}\frac{yf_1(t,x,y)}{|y|^3} < c(t) 
\]
 uniformly on $\Gamma$ (otherwise we can increase $c(t)$ by a small number $\vep>0$ without affecting other assumptions). Then there exists $y_0 > |\al'(t)| + |\beta'(t)|$ ($t\in [0,1]$) such that 
 \[
 sign(y)f_1(t,x,y) \leq c(t) y^2 \quad ((t,x)\in \Gamma, |y|> y_0).
 \]
  Let
\begin{align*}
K &= 2 \max\{|f_1(t,x,y)|+ c(t)\left((\al'(t))^2 +(\beta'(t))^2 \right): \; (t,x)\in \Gamma, |y|\leq y_0\},\\
L &= 2 \max_{t\in [0,1]}(|\al'(t)| +|\beta'(t)|).
\end{align*}
Then for $(t,x)\in \Gamma$ and $z\geq 0$ we have
\begin{align*}
f(t,x,\al'(t)-z) - f(t,x,\al'(t)) &\geq f_1(t,x,\al'(t)-z) - f_1(t,x,\al'(t)) \\
&\geq -c(t)z^2 - Lz -K
\end{align*}
\begin{align*}
f(t,x,\beta'(t)) - f(t,x,\beta'(t)+z) &\geq f_1(t,x,\beta'(t)) - f_1(t,x,\beta'(t)+z) \\
&\geq -c(t)z^2 - Lz -K.
\end{align*}
So condition ($E_3$) is valid. The proof is complete.
\qed

As an application, we consider the following problem.
\begin{equation}\label{h(x')}
\begin{cases}
x'' + p(t) g(x) -q(t) h(x') = e(t) \\
x(0) = x(1), x'(0) = x'(1),
\end{cases}
\end{equation}
where $p(t), q(t), e(t)\in C[0,1]$ with $q(t)\geq 0$ on $[0,1]$, $g(x)\in C(0,\infty)$  
 and $h(y) \in C(\bR)$.

For $p\equiv 1$ and $q\equiv 0$ Problem~\ref{h(x')} have been studied by many authors (see for example \cite{L-S, R-T-V}).

Recall that a function $g(x)$ satisfies local Lipschitz condition on a set $I\subset \bR$ if for every compact set $K\subset I$ there is $\ell_K >0$ such that
\[
|g(x_1) - g(x_2)| \leq \ell_K |x_1 -x_2| \quad (x_1, x_2 \in K).
\]

\begin{exx}\label{example2}
  Suppose that $g$ and $h$ satisfy local Lipschitz condition on $(0,\infty)$ and $\bR$ respectively. Suppose further that
\begin{equation}\label{for g}
\limsup_{x\to 0+} g(x) =\infty, \quad \lim_{x\to \infty}g(x) = 0,
\end{equation}
and
\[
 h\geq 0, \; h(0)= 0, \;
 \lim_{y\to -\infty}h(y)/y^2 = 0. 
\]
If 
\[
\bar e = \int_0^1{e(t)}dt > 0
\]
 and there is $C>0$ such that $Cp(t) \geq e(t)$ on $[0,1]$, then problem~(\ref{h(x')}) has a solution $x(t) >0$ ($t\in [0,1]$).
\end{exx}

\proof
(\ref{for g}) implies that there is $c>0$ such that $g(c) = C$. So $\al \equiv c$ is a lower solution of (\ref{h(x')}). On the other hand 
\[
\beta(t) = m + nt(1-t) -[(1-t)\int_0^t{s\, e(s)}ds + t\int_t^1{(1-s)e(s)}ds]
\]
is an upper solution of (\ref{h(x')}) when $m>0$ is sufficiently large, where 
\[
n= \frac{1}{2}\int_0^1{e(t)}dt
\]
 as in the proof of example~\ref{example1}. Since $g$ and $h$ satisfy local Lipschitz condition, it is readily seen that 
\[
f(t,x,y) = p(t) g(x) -q(t)h(y) -e(t)
\]
 satisfies condition~($E_2$). Let 
\[
f_1(t,x,y) = -q(t)h(y)\quad  \text{and}\quad f_2(t,x,y) = p(t)g(x)- e(t).
\]
 (Here we extend the domain of $f$ to $[0,1]\times \bR^2$ as in the proof of example~\ref{example1}). Then condition~(1) of Theorem~\ref{cor 1} holds with $c(t) \equiv 0$. The result then follows from Theorem~\ref{cor 1}.
\qed

\begin{remark}
Typical functions $g$ and $h$ that satisfy the requirements in Example~\ref{example2} are 
\[
g(x) = x^{-\lambda}
\]
 and
\[
h(y) = 
\begin{cases}
|y|^{\lambda_1}, & \text{ if $y < 0$}\\
y^{\lambda_2}, & \text{ if $y \geq 0$}
\end{cases}
\]
where $\lambda >0$, $1\leq \lambda_1 < 2$ and $\lambda_2 \geq 1$. The condition $\lim_{x\to \infty}g(x) = 0$ is only used in the proof to show that $\beta$ defined there is an upper solution when $m$ is large. If $p \geq 0$ this condition may certainly be weakened to $\limsup_{x\to \infty}g(x) \leq 0$.
For example, The conclusion is still true if $p \geq 0$ and $g(x) = ax^{-\lambda} - bx^\omega$ for any positive numbers $a,b,\lambda$ and $\omega$ (in contrast with with \cite[Example~3.4]{R-T-V}). Moreover the condition $\limsup_{x\to 0+} g(x) =\infty$ is only used to guarantee the existence of $c>0$ such that $g(c) = C$. It may be removed if we simply assume that there is $c>0$ such that $g(c)p(t) \geq e(t)$ ($t\in [0,1]$).
\end{remark}

Modify the construction of $\beta$ in the proof of Example~\ref{example2} we may allow $h(y)$ to change sign. Precisely we have the following.

\begin{exx}\label{example3}
 Let $p(t)\geq 0$ $q(t)\geq 0$ and $e(t) \geq 0$ ($t\in [0,1]$), and let $g$ and $h$ satisfy local Lipschitz condition on $(0,\infty)$ and $\bR$ respectively with $\limsup_{x\to \infty}g(x) \leq 0$. Suppose that 
\[
\bar q = \int_0^1{q(t)}dt > 0, \quad \bar e = \int_0^1{e(t)}dt > 0,
\]
 and suppose that
\[
h(0)= 0, \quad
 h(y) >-\bar e/\bar q \; \text{ for }|y|\leq 2\bar e,
\]
and either 
\[
\liminf_{|y|\to \infty}yh(y)/|y|^3 \geq  0, \text{ or } \limsup_{|y|\to \infty}yh(y)/|y|^3 \leq  0
\]
holds.
 If there is $c>0$ such that $g(c)p(t) \geq e(t)$ on $[0,1]$, then problem~(\ref{h(x')}) has a solution $x(t) >0$ ($t\in [0,1]$).
\end{exx}
\proof
Still $\al \equiv c$ is a lower solution of (\ref{h(x')}).  Let $n_1>0$ be a constant such that 
\[
-h(y) \leq n_1 < \bar e/\bar q
\]
 for $|y|\leq 2 \bar e$ and let $n_2 = (\bar e - \bar q n_1)/\bar p$ $(>0)$, where $\bar p = \int_0^1{p(t)}dt$. Let
\[
\beta(t) = m+ (1-t)\int_0^t{s[n_1q(s) + n_2p(s) -  e(s)]}ds + t\int_t^1{(1-s)[n_1q(s) + n_2p(s) -  e(s)]}ds
\]
We have $\beta(0) = \beta(1) = m$,
\[
\beta'(t) = \int_t^1{[n_1q(s) + n_2p(s) -  e(s)]}ds - \int_0^1{s[n_1q(s) + n_2p(s) -  e(s)]}ds,
\]
 \[
\beta'(1) - \beta'(0) =  -(n_1\bar q + n_2 \bar p -\bar e) = 0,
\]
 and 
\[
\beta''(t) = - n_1q(t) - n_2p(t) +  e(t).
\]
Since $\limsup_{x\to \infty} g(x) \leq 0$, for sufficiently large $m$ we have $g(\beta) \leq n_2$. On the other hand
\[
 - \int_t^1{ e(s)}ds - \int_0^1{s[n_1q(s) + n_2p(s) ]}ds \leq \beta'(t) \leq  \int_t^1{(n_1q(s) + n_2p(s) )}ds + \int_0^1{s\,  e(s)}ds.
\]
This implies $|\beta'(t)| \leq 2\bar e$ for $t\in [0,1]$.
Therefore
\begin{align*}
\beta''(t) + p(t)g(\beta) - q(t)h(\beta') &= -p(t)(n_2 - g(\beta)) -q(t)(n_2 + h(\beta')) +e(t) \\
&\leq e(t) \quad (t\in [0,1]).
\end{align*}
So $\beta(t)$ is an upper solution of (\ref{h(x')}) when $m>0$ is sufficiently large. Take $f_1$, $f_2$ as in the proof of example~\ref{example2}.  Then condition of Theorem~\ref{cor 1} holds with $c(t) \equiv 0$. Thus problem~(\ref{h(x')}) has a solution $x(t) \geq c$.
\qed

\begin{remark}
an example of the function $h$ that satisfies the assumption in Example~\ref{example3} is 
\[h(y) = \mu y^{2k+1} - \nu y,
\]
 where $\mu$ and $\nu$ are constants satisfying 
\[
0<\mu \leq \frac{1}{(2\bar e)^{2k}(2\bar q)} \quad \text{and} \quad 0<\nu \leq \frac{1}{(2\bar q)}.
\]
\end{remark}

\section{Proof of theorem~\ref{thm 2}}
We first observe that from ($E_3$), when 
\[
(t,x)\in \Gamma = \{(t,x): \; t\in [0,1], \al(t) \leq x\leq \beta(t)\},
\]
 the following inequalities hold.
\begin{equation}\label{3.1}
f(t,x,\al'(t)-z) - f(t,\al(t),\al'(t)) \geq -c(t)z^2 - Lz -\hat K,
\end{equation}
\begin{equation}\label{3.2}
f(t,\beta(t),\beta'(t)) - f(t,x,\beta'(t)+z) \geq -c(t)z^2 - Lz -\hat K,
\end{equation}
where 
\[
\hat K = K + \max_{(t,x)\in \Gamma}\left(|f(t,x,\al'(t) - f(t,\al(t),\al'(t))| +|f(t,x,\beta'(t)) - f(t,\beta(t),\beta'(t))|\right).
\]

The proof may be divided into 3 steps.

Step 1. Assume 
$\al(t) < \beta(t)$, $-\al''(t) < f(t,\al, \al')$,
and $-\beta''(t) > f(t,\beta, \beta')$
 for all $t\in [0,1]$. Then there are $\vep_1 >0$, $\vep_2 >0$ and $N_0 = N_0(\vep_1)>0$ such that for $t\in [0,1]$ we have $\beta(t)-\al(t) \geq \vep_1$,
\begin{equation}\label{3.3}
 -\al''(t) \leq f(t,\al, \al') - \vep_2, \quad -\beta''(t) \geq f(t,\beta, \beta') +\vep_2,
\end{equation}
and for $N\geq N_0$
\[
\beta(t)-\al(t) +\frac{1}{N}(\beta'(t)-\al'(t)) > 0 \quad (t\in [0,1]).
\]
Given such an $N$, there is $a_0 = a_0(N)$ such that when $a\geq a_0$ and $b=-a/N +N$ we have
\begin{equation}\label{3.4}
a(\beta(t)-\al(t)) - b(\beta'(t)-\al'(t)) + f(t,\beta, \beta') - f(t,\al, \al') \geq  0 \quad (t\in [0,1]).
\end{equation}
Thus, condition~($E_0$) is valid for $\delta =0$ and for this choice of $a$ and $b$ provided $N\geq N_0$. With these $a$ and $b$ we further have
\begin{align}\label{3.5}
0< k_0 &= -\frac{\lambda_1(1-e^{\lambda_2}) + \lambda_2(e^{\lambda_1}-1)}{(e^{\lambda_1}-e^{\lambda_2})} \\
&= -\lambda_2 - \frac{(\lambda_1-\lambda_2)(1-e^{\lambda_2})}{(e^{\lambda_1}-e^{\lambda_2})}\leq -\lambda_2 = N.\notag
\end{align}
Take $\Delta =0$. Relation~(\ref{3.5}) implies
\[
A_\Delta \subset \{\eta\in C^1[0,1]:\; \al\leq \eta \leq \beta, \al'-N(\eta-\al) \leq \eta'\leq \beta'+ N(\beta-\eta)\}.
\]
Denote by $E_N$ the set of all points $(t,x,y)$ in $\bR^3$ such that $t\in [0,1]$, $\al(t) \leq x\leq \beta(t)$ and 
\[
\al'(t)-N(x-\al(t)) \leq y\leq \beta'(t) +N(\beta-x).
\]
 Then $E_N$ is a compact subset of $\bR^3$. Since $f$ is continuous on $E_N$, there is a Lipschitz function $F$ such that
\[
|F(t,x_1,y_1) - F(t,x_2,y_2)|\leq C_N(|x_1-x_2| + |y_1-y_2|) \quad ((t,x_i,y_i)\in E_N, i=1,2)
\]
and
\[
|f(t,x,y)- F(t,x,y)| \leq \vep_2/2 \quad ((t,x,y)\in E_N),
\]
where $C_N>0$ is some constant depending on $N$ and $\vep_2$ (see for instance \cite[Theorem~IV.6.16]{D-S}). This leads to
\begin{equation}\label{3.6}
f(t,x_1,y_1) - f(t,x_2,y_2)\geq -C_N|x_1-x_2| - C_N|y_1-y_2| -\vep_2 \quad ((t,x_i,y_i)\in E_N, i=1,2)
\end{equation}
Now let 
\[
N = \max\{N_0, \ell + 1, \frac{\hat K +L\mu}{(1-c)\mu}\}
\]
and let
\[
a \geq max\{a_0, (N+ C_N +\ell)N\} ,
\]
where $\ell$, $\mu$ , $c$ and $L$ are as in ($E_2$) and ($E_3$). We show
 \begin{equation}\label{3.7}
 f(t,\eta, \eta') + \al''(t) \geq -a(\eta -\al) + b(\eta' - \al') \quad (\eta\in A_\Delta, t\in [0,1]). 
 \end{equation}
 Due to (\ref{3.3}) it suffices to show 
 \begin{equation}\label{3.8}
  I(t,\eta) \geq 0 \quad (\eta\in A_\Delta, t\in [0,1]),
 \end{equation}
where
\[
 I(t,\eta) = a(\eta -\al) - b(\eta' - \al') + f(t,\eta, \eta') - f(t,\al, \al') + \vep_2.
\] 
We prove (\ref{3.8}) in 3 cases, assuming $\eta\in A_\Delta$ and $t\in [0,1]$.

Case 1. If $\eta'(t) \geq \al'(t)$, then 
\[
I(t,\eta) \geq (a-C_N)(\eta -\al) + (a/N -N -C_N)(\eta' - \al') \geq 0.
\]

Case 2. If $\eta'(t) < \al'(t)$ and $\al(t)\leq \eta(t) \leq \al(t) + \mu/N$, then $(t,\eta(t), \eta'(t))\in G_\al$. From ($E_2$) we get
\[
I(t,\eta) \geq (a-\ell)(\eta -\al) + (a/N -N +\ell)(\eta' - \al') \geq (N^2 -\ell N -\ell)(\eta -\al)\geq 0.
\]

Case 3. If $\eta'(t) < \al'(t)$ and $\eta(t) > \al(t) + \mu/N$, then from (\ref{3.1}) and ($E_3$) we get
\begin{align*}
I(t,\eta) &\geq a(\eta -\al) - b(\eta' - \al') +(c(t)|\eta' - \al'| +L)(\eta' - \al') - \hat K \\
&\geq a(\eta -\al) + (-b + c(t)N(\eta -\al) +L)(\eta' -\al') -\hat K \\
&\geq a(\eta -\al) + (-b + cN +L)(\eta' -\al') -\hat K \\
&\geq [(1-c)N^2 -LN](\eta -\al)-\hat K\geq [(1-c)N -L]\mu-\hat K \geq 0.
\end{align*}
Therefore, we have shown that (\ref{3.8}) is valid. As a consequence, (\ref{3.7}) holds. Similar argument gives
 \[f(t,\eta, \eta') + \beta''(t) \leq a(\beta -\eta) - b(\beta' - \eta') \quad (\eta\in A_\Delta, t\in [0,1]). \]
These lead to that ($E_1$) is satisfied (with $\Delta=0$). By Theorem~\ref{thm 1}, problem (\ref{1.1}) has a solution $x(t)$ that satisfies $\al(t) \leq x(t)\leq \beta(t)$ and 
\[
\al'(t)-N(x(t)-\al(t)) \leq x'(t)\leq \beta'(t) +N(\beta(t)-x(t)),
\]
 where $N$ is a constant related to $\vep_1$ but independent of $\vep_2$. 

Step 2. Assume $\al(t) < \beta(t)$, $-\al''(t) \leq f(t,\al, \al')$, and $-\beta''(t) \geq f(t,\beta, \beta')$ for all $t\in [0,1]$. To each $\vep >0$ let
\[
f_\vep(t,x,y) = f(t,x,y) + \gamma_\vep(t,x),
\]
where 
\[
\gamma_\vep(t,x) = (1-\frac{x-\al(t)}{\beta(t)-\al(t)})\vep.
\]
 Clearly, $\gamma_\vep(t,\al(t))=\vep$, $\gamma_\vep(t,\beta(t))=-\vep$ and $|\gamma_\vep(t,x)|\leq\vep$ if $\al(t)\leq x\leq \beta(t)$. So we have $-\al''(t) < f_\vep(t,\al, \al')$, and $-\beta''(t) > f_\vep(t,\beta, \beta')$ for all $t\in [0,1]$. From the conclusion of step 1, for each $0<\vep <1$, there is $N>0$ independent of $\vep$ such that the problem
\begin{equation}\label{3.9}
 \begin{cases}
 -x'' = f_\vep(t,x,x') \qquad (t\in [0,1]) \\
 x(0)=x(1), \, x'(0) = x'(1).
 \end{cases}
 \end{equation}
 has a solution $x_\vep(t)$ satisfying $\al(t) \leq x_\vep(t)\leq \beta(t)$ and 
\[
\al'(t)-N(x_\vep(t)-\al(t)) \leq x_\vep'(t)\leq \beta'(t) +N(\beta(t)-x_\vep(t)).
\]
 Since $f$ is continuous, the nets $\{x_\vep(t)\}_{0<\vep<1}$, $\{x'_\vep(t)\}_{0<\vep<1}$
 and $\{x''_\vep(t)\}_{0<\vep<1}$ are uniformly bounded. From the Ascoli-Arzel\'a Theorem, there is a sequence $\vep_n \to 0+$ and a function $x(t)\in C^1[0,1]$ such that $\lim_{n\to\infty}x_{\vep_n}(t) = x(t)$
and $\lim_{n\to \infty}x'_{\vep_n}(t) = x'(t)$ uniformly on $[0,1]$. From the identity $-x''_\vep = f_\vep(t,x_\vep,x_\vep')$ we also have $\lim_{n\to \infty}x''_{\vep_n}(t)$ converges uniformly on  $[0,1]$. So $x''(t)$ exists and $-x'' = f_\vep(t,x,x')$ on $[0,1]$. Hence $x(t)$ is a solution of (\ref{1.1}) and  $\al(t) \leq x(t)\leq \beta(t)$ and 
\[
\al'(t)-N(x(t)-\al(t)) \leq x'(t)\leq \beta'(t) +N(\beta(t)-x(t)) \quad (t\in [0,1]).
\]

Step 3. We now prove Theorem~\ref{thm 2} for the general setting that $\al(t) \leq \beta(t)$, $-\al''(t) \leq f(t,\al, \al')$, and $-\beta''(t) \geq f(t,\beta, \beta')$ for all $t\in [0,1]$.

Let $F(t,x,y) = f(t,\bar x, y)$, where 
\[
\bar x = \max\{\al(t), \min\{x, \beta(t)\}\}.
\]
 For each $\vep > 0$ let $\al_\vep(t) = \al(t) - \vep$ and $\beta_\vep(t) = \beta(t)$. then $\al_\vep(t) < \beta_\vep(t)$ ($t\in [0,1]$). When $\vep$ is sufficiently small, ($E_2$) and ($E_3$) hold with $f$, $\al$ and $\beta$ replaced by $F$, $\al_\vep$ and $\beta_\vep$ respectively. From the conclusion obtained in step 2, the problem
\[
 \begin{cases}
 -x'' = F(t,x,x') \qquad (t\in [0,1]) \\
 x(0)=x(1), \, x'(0) = x'(1).
 \end{cases}
 \]
has a solution $x(t)$ satisfying 
\[
\al_\vep(t) \leq x(t)\leq \beta_\vep(t) = \beta(t).
\]
 We prove that $x(t) \geq \al(t)$ ($t\in [0,1]$). Then by the definition of $F$, $x(t)$ is indeed a solution of (\ref{1.1}) satisfying $\al(t) \leq x(t)\leq  \beta(t)$ ($t\in [0,1]$). This will complete the proof.

We may assume $\al(t)$ is not a solution of problem (\ref{1.1}). If $x(t)-\al(t) < 0$ for some $t\in [0,1]$ then $x(t) - \al(t)$ is not a constant. Otherwise $\al(t)$ would be a solution of (\ref{1.1}). Hence there is $t_1\in [0,1)$ such that $x(t_1)-\al(t_1) < 0$, $x'(t_1)=\al'(t_1)$, and in any right neighborhood of $t_1$ there exists $t$ such that $x(t)-\al(t) > x(t_1)-\al(t_1)$. This implies that in any right  neighborhood of $t_1$ there exists $t_2$ such that $x'(t_2)-\al'(t_2) >0$. On the other hand, since $x(t_1)<\al(t_1)$, there is a right  neighborhood of $t_1$ on which $x(t)<\al(t)$. So on this right neighborhood of $t_1$
\[
-x''(t) = f(t,\al(t),x'),
\]
that is, on this neighborhood, $x'(t)$ is a solution of the initial value problem
\begin{equation}\label{3.10}
-y'(t) = f(t,\al(t),y), \quad y(t_1) = \al'(t_1). 
\end{equation}
By ($E_2$), $f(t,\al(t),y)$ is locally Lipschitzian with respect to $y$ near the curve $(t,\al(t),\al'(t))$. Hence the solution of (\ref{3.10}) is locally unique. However, $\al(t)$ is a lower solution of (\ref{1.1}). It satisfies 
\[
-(\al'(t))' \leq f(t,\al(t),\al').
\]
By a well-known differential inequality (see, for instant, \cite[Theorem~III.4.1]{18}), we must have $\al'(t) \geq x'(t)$ in an entire right neighborhood of $t_1$. This contradicts the property of those points $t_2$. Therefore $x(t) \geq \al(t)$ ($t\in [0,1]$) as long as $\al(t)$ is not a solution of problem (\ref{1.1}). But if  $\al(t)$ is a solution of problem (\ref{1.1}), then the result is trivial. The proof is complete.
\qed

\end{document}